\documentclass[11pt]{amsart}
\usepackage{amsmath,amsthm,amssymb}
\usepackage[latin1]{inputenc}
\usepackage[T1]{fontenc}
\usepackage[all]{xy}
\usepackage{mathrsfs}
\textwidth=6.5in
\topmargin=0.2in
\oddsidemargin=-0.00in
\evensidemargin=-0.00in
\textheight = 8.5in

\numberwithin{equation}{section}
\newtheorem{thm}[equation]{Theorem}
\newtheorem*{thma*}{Theorem A}
\newtheorem*{thmb*}{Theorem B}
\newtheorem*{thmc*}{Theorem C}
\newtheorem*{thmd*}{Theorem D}

\newtheorem{lem}[equation]{Lemma}

\theoremstyle{definition}

\newtheorem{rem}[section]{Remark}

\newtheorem*{acknow}{Acknowledgement}

\def\Z{\mathbb Z}
\def\A{\mathbb A}
\def\Q{\mathbb Q}
\def\C{\mathbb C}

\def\E{\mathbb E}

\def\<{\langle}
\def\>{\rangle}


\def\GL{{\rm GL}}


\usepackage{bm}
\usepackage{upgreek}
\makeatletter
\newcommand{\bfgreek}[1]{\bm{\@nameuse{up#1}}}
\makeatother

\setcounter{secnumdepth}{5}

\begin{document}

\title[Motivic Periods]{On Deligne's periods for tensor product motives}
\author{\bf Chandrasheel Bhagwat}
\date{\today}
\address{Chandrasheel Bhagwat, Indian Institute of Science Education and Research, Dr.Homi Bhabha Road, Pashan, Pune 411008, INDIA.}
\email{cbhagwat@iiserpune.ac.in}

\subjclass[2000]{Primary 11F67; Secondary 11G09}
\thanks{The author is partially supported by the DST-INSPIRE Faculty scheme, award number [IFA- 11MA -05].}

\begin{abstract}

In this paper, we give a description of Deligne's periods $c^\pm$ for tensor product of pure motives $M \otimes M'$ over $\Q$ in terms of the period invariants attached to $M$ and $M'$ by Yoshida~\cite{Yo}. The period relations proved by the author and Raghuram in an earlier paper follow from the results of this paper.

\end{abstract}

\maketitle

\section{\bf Introduction}

Let $M$ be a pure motive over $\Q$ with coefficients in a number field $\Q(M).$ Suppose $M$ is critical, then
a celebrated conjecture of Deligne~\cite[Conj.\,2.8]{deligne} relates the critical values of its $L$-function $L(s,M)$ to certain periods  $c^{\pm}(M(\Pi))$ which are defined through a comparison of the Betti and de~Rham realizations of the motive.

Conjecturally, one can associate a motive $M(\Pi)$ to a given cohomological cuspidal automorphic representation $\Pi$ of $GL_{n}(\A_{\Q})$. One expects from this correspondence that the standard $L$-function $L(s, \Pi)$ is the motivic $L$-function $L(s, M(\Pi))$ up to a shift in the $s$-variable; see Clozel \cite[Sect.\,4]{clozel}. There are certain periods $p^\epsilon(\Pi)$ which have been defined by Raghuram-Shahidi \cite{raghuram-shahidi-imrn}. Given cohomological cuspidal automorphic representations $\Pi$ and $\Sigma$ of $GL_{n}(\A_{\Q})$ and $GL_{n-1}(\A_{\Q})$ respectively, Raghuram (\cite{raghuram-imrn}, \cite{raghuram-2014}) has proved that the product $p^\epsilon(\Pi)$ $p^\eta(\Sigma)$, for a suitable choice of signs $\epsilon$ and $\eta$, appears in the critical values of the Rankin-Selberg $L$-function $L(s,~\Pi \times \Sigma)$. One can ask whether there is an analogous relation for the Deligne periods so that the results of \cite{raghuram-2014} are compatible with Deligne's conjecture.

\smallskip

In this paper, we give a description of Deligne's periods $c^\pm(M \otimes M')$ for the tensor product $M \otimes M'$, where $M$ and $M'$ are two pure motives over $\Q$ all of whose nonzero Hodge numbers are one, in terms of the periods $c^\pm(M)$, $c^\pm(M')$ {\it and} some other finer invariants attached to $M$ and $M'$ by Yoshida~\cite{Yo}. The main period relations are in
Thm.\,\ref{PR1}, Thm.\,\ref{PR2} and Thm.\,\ref{PR3}. The period relations for the ratio $\frac{c^{+}(M \otimes M')}{c^{-}(M \otimes M')}$  proved by the author and Raghuram in \cite{bhagwat-raghuram} follow from these results.

\smallskip

\section{\bf Preliminaries}\label{prelim}

\subsection{Critical motives}

Let $M$ be a motive defined over $\Q$ with coefficients in a number field $\mathbb{E}$. Let $H_{B}(M)$ be the
{\it Betti realization} of $M$. It is a finite-dimensional vector space over $\E$. The rank $d(M)$ of $M$ is defined to be $\text{dim}_{\E}H_{B}(M)$. Write $H_{B}(M) \ = \ H^{+}_{B}(M) \oplus H^{-}_{B}(M),$
where $H^{\pm}_{B}(M)$ are the $\pm 1$-eigenspaces for the action of complex conjugation $\rho$ on
$H_{B}(M)$. Let $d^{\pm}(M)$ be the $\E$-dimensions of $H^{\pm}_{B}(M)$. The Betti realization has a
{\it Hodge decomposition}:
\begin{equation}
\label{eqn:hodge-decomp}
H_{B}(M) \otimes_{\Q} \C \ = \ \bigoplus_{p,q \in \Z} H^{p,q}(M),
\end{equation}
where $H^{p,q}(M)$ is a free $\E \otimes \C$-module of rank $h^{p,q}_M$. The numbers $h^{p,q}_M$ are called the {\it Hodge numbers} of
$M.$  We say that $M$ is {\it pure} if there is an integer $w$ (which is called the purity weight of $M$) such that
 $H^{p,q}(M) = \left\{0\right\}$ if $p+q \neq w.$ Henceforth, we assume that all the motives we consider are pure.
We also have $\rho(H^{p,q}(M)) = H^{q,p}(M);$ and hence $\rho$ acts on the (possibly zero) middle Hodge type $H^{w/2,w/2}(M).$

\medskip

Let  $H_{DR}(M)$ be the {\it de~Rham realization} of $M$; it is a $d(M)$-dimensional vector space over $\mathbb{E}$. There is a comparison isomorphism of $\E \otimes_\Q \C$-modules:
\[
I : H_{B}(M) \otimes_{\Q} \C  \ \longrightarrow \ H_{DR}(M) \otimes_{\Q} \C.
\]
The de~Rham realization has a {\it Hodge filtration} $F^p(M)$ which is a decreasing filtration of $\E$-subspaces of $H_{DR}(M)$ such that $I \left(\oplus_{p' \geq p} H^{p',w-p'}(M)\right) = F^{p}(M) \otimes_{\Q} \C.$
Write the Hodge filtration as
\begin{equation}
\label{Hodge}
H_{DR}(M) = F^{p_1}(M) \supsetneq F^{p_2}(M) \supsetneq \cdots \supsetneq F^{p_m}(M) \supsetneq F^{p_{m+1}}(M) = \left\{0\right\};
\end{equation}
all the inclusions are proper and there are no other filtration-pieces between two successive members.  We assume that the numbers $p_\mu$ are maximal among all the choices.
Let $s_\mu = h^{p_\mu, w-p_\mu}_M$ for $1 \leq \mu \leq m.$ Purity plus the action of complex conjugation on Hodge types says that the numbers $p_j$  and $s_\mu$ satisfy
$p_{j} + p_{m+1-j} = w, \forall\  1 \leq j \leq m,$ and
$s_\mu = s_{m+1-\mu}, \forall\  1 \leq \mu \leq m.$

\medskip

We say that the motive {\it $M$ is critical} if there exist $p^+, p^- \in \Z$ such that
$ \sum_{i=1}^{p^{+}} s_i= d^+(M)$ and $\sum_{i=1}^{p^{-}} s_i= d^-(M).$
In this case one says that $F^{\pm}(M)$ exists and equals $F^{{p}^{\pm}}(M)$.

\medskip

\subsection{Tensor product of motives}

Let $M$ and $M'$ be pure motives defined over $\Q$ and with coefficients in a number field $\E$. Suppose that their ranks are $n$ and $n'$ and purity weights are $w$ and $w'$, resp. We further assume that all the non-zero Hodge numbers of $M$ and $M'$ equal to $1$.\smallskip

Suppose $H_{B}(M) \otimes \C =  \oplus_{j=1}^{n} H^{p_{j},~ w-p_{j}}(M),$
where $p_{j}$ are integers such that $p_{1} < p_{2} < \ldots < p_{n}$. Similarly, suppose
$H_{B}(M') \otimes \C =  \bigoplus \limits_{j=1}^{n'} H^{q_{j}, ~w'-q_{j}}(M'),$
with $q_{1} < q_{2} < \ldots < q_{n'}$.

\smallskip

Since all the non-zero Hodge numbers of $M$ and $M'$ equal to $1$, it follows that the Hodge filtrations of the de~Rham realizations of $M$, $M'$ and $M \otimes M'$ are given by
\[ H_{DR}(M) =  F^{p_{1}}(M) \supsetneq F^{p_{2}}(M) \supsetneq \ldots \supsetneq F^{p_{n}}(M) \supsetneq (0).\]
\[ H_{DR}(M') =  F^{q_{1}}(M') \supsetneq F^{q_{2}}(M') \supsetneq \ldots \supsetneq F^{q_{n'}}(M') \supsetneq (0).\]
\[ H_{DR}(M \otimes M') =  F^{r_{1}}(M \otimes M') \supsetneq F^{r_{2}}(M \otimes M') \supsetneq \ldots \supset F^{r_{m}}(M \otimes M') \supsetneq (0).\]

\smallskip

Let $u_t$ denote the dimension of $ F^{r_{t}}(M \otimes M')/ F^{r_{t+1}}(M \otimes M')$ for $1 \leq t \leq m$. Let us further assume that $M \otimes M'$ is critical. Consider the complex conjugation action on Betti realizations for the motives $M$ and $M'$. \smallskip

If the dimension $nn'$ is an even integer, it follows that $d^{\pm}(M \otimes M')$ are equal to $\frac{nn'}{2}$. From the criticality of $M \otimes M'$, it follows that there is $k^{+} = k^{-} = k_{0} \geq 1$ such that,
\[ u_1 + u_2 + \ldots + u_{k_0} = d^{\pm}(M \otimes M') = \frac{nn'}{2}.\]
Let $1 \leq i \leq n$ and $ 1 \leq j \leq n'$. Following Yoshida \cite{Yo}, we define:
$$
a_i = |\left\{j : 1 \leq j \leq n' : p_i + q_j \leq r_{k_{0}}  \right\}|,\quad \quad a^{\ast}_j = |\left\{i : 1 \leq i \leq n : p_i + q_j \leq r_{k_{0}}  \right\}|.
$$

\smallskip

If $nn'$ is odd, there exist $k^{+}$, $k^{-}$ such that
\[ u_1 + u_2 + \ldots + u_{k^+} \quad = \quad d^{+}(M \otimes M') \quad = \quad \frac{nn' \pm 1}{2},\]
\[ u_1 + u_2 + \ldots + u_{k^{-}} \quad = \quad d^{-}(M \otimes M') \quad = \quad  \frac{nn' \mp 1}{2}.\]

\medskip

It follows that $k^+ - k^- =  d^{+}(M \otimes M') - d^{-}(M \otimes M') = \pm 1$.
Let $1 \leq i \leq n$ and $ 1 \leq j \leq n'$. In this case we define:
$$
a^{\pm}_i = |\left\{j : 1 \leq j \leq n' : p_i + q_j \leq r_{k^{\pm}}  \right\}|,\quad \quad a^{\ast, \pm}_j = |\left\{i : 1 \leq i \leq n : p_i + q_j \leq r_{k^{\pm}}  \right\}|.
$$

\subsection{Invariant polynomials and periods}
\label{sec:yoshida-polynomials}
The period matrix of $M$ is defined in terms of $\E$-bases for the spaces $H^{\pm}_{B}(M)$ and $H_{DR}(M)$.
Let $\left\{v_{1},v_{2},\ldots,v_{d^{+}(M)}\right\}$ be an $\E$-basis of $H^{+}_{B}(M)$, and similarly,
$\left\{v_{d^{+}(M) + 1},v_{d^{+}(M) + 2},\ldots,v_{d(M)}\right\}$ be an $\E$-basis of $H^{-}_{B}(M)$. Let $\left\{w_{1},w_{2},\ldots,w_{d(M)}\right\}$ be a basis of $H_{DR}(M)$ over $\E$ such that $\left\{w_{s_{1}+s_{2}+ \ldots +s_{\mu -1}+1},\ldots,w_{d(M)}\right\}$ is a basis of $F^{p_\mu}(M)$ for $1 \leq \mu \leq m$.
The period matrix $X$ of $M$ is the matrix which represents the comparison isomorphism between the two realizations of $M$ with respect to the bases chosen above. The fundamental periods $c^{\pm}(M)$ and $\delta(M)$ are related to the matrix $X$ through certain invariant polynomials.

\smallskip

Let $\mathbb{F}$ be a number field. Suppose $d$ is a positive integer. Fix a partition $s_1 + s_2 + \ldots + s_m = d$. Let $P_m$ be the corresponding lower parabolic subgroup of $\GL(d)$. Given an $m$-tuple of integers $(a_i)_{1 \leq i \leq m}$, define an algebraic character $\lambda_1$ of $P_m$ by
\[
\lambda_1 \left( {\begin{pmatrix} p_{11}& 0 & \ldots & 0 \\ \ast & p_{22} & \ldots & 0\\\ast & \ast & \ddots & \ldots \\ \ast & \ast & \ast & p_{mm} \end{pmatrix}} \right) = \prod \limits_{1 \leq i \leq m}(\text{det}~p_{ii})^{a_i};  \quad p_{ii} \in \GL(s_i).
\]
Let $d = d^+ + d^-$. Given $k^+, k^- \in \Z$, define a character $\lambda_2$ of $\GL(d^+) \times  \GL(d^-)$ by
\[
\lambda_2 \left( \begin{pmatrix} a & 0 \\ 0 & b \end{pmatrix} \right) = ~(\text{det}~a)^{k^+} (\text{det}~b)^{k^-}, \quad a \in \GL(d^+),~ b \in \GL(d^-).
\]

Let $f(x)$ be a polynomial with rational coefficients which satisfies the following equivariance condition with respect to the left action of $P_m$ and the right action of $\GL(d^+) \times \GL(d^-)$ on the matrix ring $M_{d}(F)$:
\begin{equation}
\label{inv}
f(p x \gamma) = \lambda_1(p) f(x) \lambda_2({\gamma}),  \quad \forall~ p \in P_m, \ \
\forall~ \gamma \in \GL(d^+) \times \GL(d^-).
\end{equation}
A polynomial satisfying (\ref{inv})
is said to have admissibility type $\left\{(a_1, a_2, \ldots, a_m), (k^+, k^-)\right\}$.
Yoshida \cite[Theorem 1]{Yo} proves that  the space of polynomials of a given admissibility type is at most one.

 \begin{lem}\label{multi}
 If the polynomial $f(x)$ has admissibility type $\left\{(a_1, a_2, \ldots, a_m), (k_1^+, k_1^-)\right\}$,
 and $g(x)$  has admissibility type  $\left\{(b_1, b_2, \ldots, b_m), (k_2^+, k_2^-)\right\}$, then the polynomial $h(x) = f(x)g(x)$ has admissible type is given by
 \[
 \left\{(a_1 + b_1, a_2 + b_2, \ldots, a_m + b_m), (k_1^+ + k_2^+, k_1^- + k_2^-)\right\}.
 \]
 \end{lem}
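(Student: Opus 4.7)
The plan is to simply multiply the two equivariance identities together and then read off the resulting character. Since the target of each admissibility character is the (commutative) multiplicative group of scalars, the values commute freely with the values of $f$ and $g$, so the verification should amount to one line together with an identification of the product characters.

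First I would fix an arbitrary $p \in P_m$ and an arbitrary $\gamma \in \GL(d^+) \times \GL(d^-)$, and compute
\[
h(p x \gamma) \;=\; f(p x \gamma)\, g(p x \gamma) \;=\; \lambda_1(p)\, f(x)\, \lambda_2(\gamma) \cdot \mu_1(p)\, g(x)\, \mu_2(\gamma),
\]
where $\lambda_1, \lambda_2$ are the two admissibility characters associated with $f$ (as in (\ref{inv})) and $\mu_1, \mu_2$ are the two admissibility characters associated with $g$. Because $\lambda_i(\cdot)$ and $\mu_i(\cdot)$ are scalars, I can regroup the right-hand side as $(\lambda_1 \mu_1)(p)\, h(x)\, (\lambda_2 \mu_2)(\gamma)$.

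Next I would identify these product characters explicitly. Using the block description of elements of $P_m$ and the multiplicativity of the determinant, a direct expansion gives
\[
(\lambda_1 \mu_1)\!\left( \begin{pmatrix} p_{11} & & \\ \ast & \ddots & \\ \ast & \ast & p_{mm}\end{pmatrix}\right) \;=\; \prod_{i=1}^{m} (\det p_{ii})^{a_i+b_i},
\]
and similarly $(\lambda_2 \mu_2)\!\left(\begin{smallmatrix} a & 0 \\ 0 & b\end{smallmatrix}\right) = (\det a)^{k_1^+ + k_2^+} (\det b)^{k_1^- + k_2^-}$. This is exactly the pair of characters attached to the admissibility type $\{(a_1+b_1,\ldots,a_m+b_m),(k_1^+ + k_2^+, k_1^- + k_2^-)\}$, so $h$ satisfies (\ref{inv}) for this type, completing the argument.

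There is no real obstacle: the lemma is a formal consequence of the fact that the admissibility characters are, well, characters of the acting groups. The only mild thing to be careful about is that the equivariance in (\ref{inv}) is a two-sided condition (left action by $P_m$, right action by $\GL(d^+)\times\GL(d^-)$), so I must keep the left and right characters from the two factors separated when regrouping; scalarity of the character values makes this regrouping legitimate.
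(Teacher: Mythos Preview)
Your proof is correct and follows exactly the same approach as the paper, which simply notes that the result follows from the equivariance condition (\ref{inv}); you have merely spelled out the one-line verification in full detail.
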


\begin{proof} Follows from (\ref{inv}).
\end{proof}

The admissibility type of $f(x) = {\rm det}(x)$ for $x \in M_{d}(F)$, is
$\left\{(1,1,1,\ldots,1),(1,1)\right\}$.
Let $f^{\pm}(x)$ be the upper left  (resp., upper right) $d^{\pm} \times d^{\pm}$ determinant of $x$.
 Then it can be seen that the admissibility types of $f^+(x)$ and $f^-(x)$ are respectively given by
 \[
 \{(\underbrace{1,1,1,\ldots,1}_{p^{+}},0,0, \ldots, 0),(1,0)\}, \quad
 \{(\underbrace{1,1,1,\ldots,1}_{p^{-}},0,0, \ldots, 0),(0,1)\}.
 \]

Yoshida interprets the period invariants to the period matrix $X$ via invariant polynomials as $\delta(M) = f(X)$ and $c^{\pm}(M) = f^{\pm}(X)$. The determinant of the period matrix is an element of $(\E \otimes \C)^\times$, and making a choice of basis says that it is well-defined modulo $\E^{\times}$.

\medskip

\section{\bf{Calculation of motivic periods $c^{\pm}(M \otimes M')$ }}

One knows from the results of Yoshida \cite{Yo} that the motivic periods $c^{\pm}(M \otimes M')$ can be expressed as monomials in the other period invariants $\delta(M)$, $c^{\pm}(M)$, $c^{\pm}(M')$,
$c_{p}(M)$, $c_{p}(M')$, ($p$ runs over a finite set). For the definitions of these period invariants, see \cite{Yo}. In this section we calculate these monomials explicitly. \smallskip

First we consider the case where the ranks of motives have opposite parities. Let $M$ and $M'$ be motives defined over $\Q$ with coefficients in $\mathbb{E}$ as in section \ref{prelim} with ranks $n = 2k$ and $n' = 2k' + 1$, resp. We set $\epsilon(M') : = d^{+}(M') - d^{-}(M') = \pm 1.$ Thus $d^{\pm}(M) = k$, $d^{+}(M') = k'+ \frac{(\epsilon(M')+1)}{2}$ and $d^{-}(M') = k'+\frac{(1-\epsilon(M'))}{2}$. Let's define two finite sets by,
$$\mathcal{P} = \left\{ 1,~2,~\ldots, k-1 = ~\text{min} \left\{d^{\pm}(M) \right\} -1 \right\}, $$ $$\mathcal{P'} = \left\{ 1,~2,~\ldots, k' - 1 = ~\text{min} \left\{d^{\pm}(M') \right\} - 1 \right\}. $$\smallskip

Consider the expression for $c^{+}(M \otimes M')$ as a monomial in other period invariants with integer exponents as follows:

\begin{equation} \label{1}
c^{+}(M \otimes M') = \delta(M)^{\alpha}~ \delta(M')^{\beta}~ c^{+}(M)^{\alpha^{+}}~ c^{-}(M)^{\alpha^{-}} ~ c^{+}(M')^{\beta^{+}}~
c^{-}(M')^{\beta^{-}} ~  \prod \limits_{p \in \mathcal{P} } c_{p}(M)^{\alpha_{p}} \prod \limits_{p \in \mathcal{P}} c_{p}(M')^{\beta_p}.
\end{equation}

\medskip

We have a similar expression for $c^{-}(M \otimes M')$.
From Yoshida \cite{Yo}, we know that admissibility types for the period invariants $\delta(M)$, $\delta(M')$, $c^{\pm}(M)$, $c^{\pm}(M')$,
$c_{p}(M)$, $c_{p}(M')$ are given by:\\

$\delta(M) : (\underbrace{1,~1,~\ldots,~1}_{n ~\text{times}}),~(1,1)$,\quad $\delta(M') : (\underbrace{1,~1,~\ldots,~1}_{n' ~\text{times}}),~(1,1),$ \\ \smallskip

$c^{+}(M) : (\underbrace{1,~1,~\ldots,~1}_{d^{+}(M)~ \text{times}},~\underbrace{0,~0,~\ldots,~0}_{d^{-}(M)~ \text{times}}),~(1,~0)$,\quad  $c^{-}(M) : (\underbrace{1,~1,~\ldots,~1}_{d^{-}(M)~ \text{times}},~\underbrace{0,~0,~\ldots,~0}_{d^{+}(M)~ \text{times}}),~(0,~1),$ \\ \smallskip

$c^{+}(M') : (\underbrace{1,~1,~\ldots,~1}_{d^{+}(M')~ \text{times}},~\underbrace{0,~0,~\ldots,~0}_{d^{-}(M')~ \text{times}}),~(1,~0)$, \quad $c^{-}(M') : (\underbrace{1,~1,~\ldots,~1}_{d^{-}(M')~ \text{times}},~\underbrace{0,~0,~\ldots,~0}_{d^{+}(M')~ \text{times}}),~(0,~1),$\\ \smallskip

$c_{p}(M) : (\underbrace{2,~2,~\ldots,~2}_{p \ \text{times}},~\underbrace{1,~1,~\ldots,~1}_{n-2p\ \text{times}},~\underbrace{0,~0,~\ldots,~0}_{p \ \text{times}}),~ (1,~1) \quad \forall \quad p \in \mathcal{P},$ \ {\rm and} \\ \smallskip

$c_{p}(M') : (\underbrace{2,~2,~\ldots,~2}_{p \ \text{times}},~\underbrace{1,~1,~\ldots,~1}_{n'-2p\ \text{times}},~\underbrace{0,~0,~\ldots,~0}_{p \ \text{times}}), ~(1,~1) \quad \forall \quad p \in \mathcal{P'}.$ \\ \medskip

Let $X$ and $Y$ be the period matrices corresponding to the comparison isomorphism between Betti and de~Rham realizations. Then $c^{\pm}(M \otimes M')$ equals the product of $ \phi^{\pm}(X)~ \psi^{\pm}(Y)$ and their admissibility types are:
$$
\phi^{\pm}(X) : (a_1,~a_2,~\ldots~a_n), ~(d^{\pm}(M'), ~d^{\mp}(M')),  \quad {\rm and} \quad
\psi^{\pm}(Y) : (a^{\ast}_1,~a^{\ast}_2,~\ldots~a^{\ast}_{n'}),~ (d^{\pm}(M), ~d^{\mp}(M)).
$$
Using the admissibility types and (\ref{1}) we get:
$$
 \alpha = a_{n},\quad \alpha^{+} = a_k - n'/2 + \epsilon(M')/2, \quad  \alpha^{-} = a_k - n'/2 - \epsilon(M')/2, \quad  \alpha_{p} = a_{p} - a_{p+1} \quad \forall  \ p \in \mathcal{P},
$$
$$
\beta = a^{\ast}_{n'},\quad \beta^{+} = \beta^{-} = a^{\ast}_{k'} - n/2, \quad
\beta_{p} = a^{\ast}_{p} - a^{\ast}_{p+1} \quad \forall  \ p \in \mathcal{P'}.
$$

\medskip

\begin{thm} \label{PR1}
If the ranks of $M$ and $M'$ are even and odd respectively and $M \otimes M'$ is critical, then the periods $c^{\pm} (M \otimes M')$ are given by:
\begin{equation*}
c^{+} (M \otimes M') = T c^{\epsilon(M')}(M), \quad c^{-} (M \otimes M') = T c^{-\epsilon(M')}(M),
\end{equation*}
where $T$ is defined as:
\begin{multline*}
T = \delta(M)^{a_{n}} ~\delta(M')^{a^{\ast}_{n'}}~ [c^{+}(M)c^{-}(M)]^{a_{k} - k' - 1}~  [c^{+}(M')c^{-}(M')]^{a^{\ast}_{k'} - k - 1}
\\
\cdot \prod \limits_{p \in \mathcal{P}} c_{p}(M)^{a_{p} - a_{p+1}} ~ \prod \limits_{p \in \mathcal{P'}} c_{p}(M')^{a^{\ast}_{p} - a^{\ast}_{p+1}}.
\end{multline*}
\end{thm}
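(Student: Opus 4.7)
The strategy is to substitute the explicit exponents derived in the discussion preceding the theorem into the monomial expression (\ref{1}) for $c^+(M\otimes M')$ and then regroup, isolating an asymmetric factor $c^{\epsilon(M')}(M)$ from a symmetric power of $c^+(M)c^-(M)$. The existence of such a monomial expression is guaranteed by combining the factorization $c^\pm(M\otimes M')=\phi^\pm(X)\psi^\pm(Y)$ with Yoshida's uniqueness of invariant polynomials of a prescribed admissibility type (\cite[Thm.\,1]{Yo}), applied separately to each factor.

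First I would confirm that matching the admissibility types of $\phi^+(X)$ and $\psi^+(Y)$ against the monomials on the right of (\ref{1}) reduces, via Lemma \ref{multi}, to a linear system whose solution is the sequence of exponents $\alpha,\alpha^\pm,\alpha_p,\beta,\beta^\pm,\beta_p$ listed just above the theorem. The telescoping identities $\alpha_p=a_p-a_{p+1}$ and $\beta_p=a^{\ast}_p-a^{\ast}_{p+1}$ come from comparing admissibility values at consecutive interior positions, while $\alpha=a_n$ and $\beta=a^{\ast}_{n'}$ come from the terminal coordinates at which only $\delta(M)$, $\delta(M')$ contribute. The central exponents $\alpha^\pm,\beta^\pm$ require the $(k^+,k^-)$-row together with positions $i=k$, $i=k'$ and the Hodge/purity symmetry $a_i+a_{n+1-i}=n'$ (coming from $p_i+p_{n+1-i}=w$, $q_j+q_{n'+1-j}=w'$ and the characterization of $r_{k_0}$ as the middle threshold); these yield $\alpha^\pm=a_k-n'/2\pm\epsilon(M')/2$ and $\beta^+=\beta^-=a^{\ast}_{k'}-n/2$.

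With the exponents in hand, I would then regroup. Since $n'/2=k'+1/2$ and $\epsilon(M')=\pm1$, the pair $(\alpha^+,\alpha^-)$ equals $(a_k-k',\,a_k-k'-1)$ or the reverse according to the sign of $\epsilon(M')$; this packages the $M$-contribution as $[c^+(M)c^-(M)]^{a_k-k'-1}\cdot c^{\epsilon(M')}(M)$. The $M'$-contribution, being symmetric because $\beta^+=\beta^-$, collapses to a symmetric power of $c^+(M')c^-(M')$. Assembling the pieces yields the stated formula for $c^+(M\otimes M')$. The formula for $c^-(M\otimes M')$ is obtained by running the same argument with $\phi^-(X)\psi^-(Y)$ in place of $\phi^+(X)\psi^+(Y)$: the $(k^+,k^-)$-row of $\phi^-(X)$ is $(d^-(M'),d^+(M'))$, which flips the sign of $\epsilon(M')$ on the $M$-side and produces $c^{-\epsilon(M')}(M)$, while the $M'$-side is unaffected because $d^+(M)=d^-(M)=k$.

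The main obstacle I anticipate is the careful invocation of the purity symmetry $a_i+a_{n+1-i}=n'$ (and its $M'$-analogue) to obtain the closed forms for $\alpha^\pm,\beta^\pm$ involving $n'/2$ and $n/2$; apart from that, the argument is essentially determined bookkeeping controlled by Lemma \ref{multi} and the uniqueness clause of Yoshida's theorem.
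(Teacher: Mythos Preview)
Your proposal is correct and follows essentially the same approach as the paper: the paper also writes $c^{\pm}(M\otimes M')=\phi^{\pm}(X)\psi^{\pm}(Y)$, matches the admissibility types of this factorization against the monomial expression (\ref{1}) via Lemma~\ref{multi} and Yoshida's uniqueness to read off the exponents $\alpha,\alpha^{\pm},\alpha_p,\beta,\beta^{\pm},\beta_p$, and then regroups exactly as you describe. Your write-up is in fact more explicit than the paper's, which simply states the resulting exponents without spelling out the telescoping or the role of the purity symmetry.
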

\medskip
The $\pm$ sign in the exponent of $c^{\pm}(M)$ period in the above expression is determined by the sign of $\epsilon(M')$. This in particular is consistent with the following result of \cite{bhagwat-raghuram}.
\begin{equation} \label{rel1}
\frac{c^{+}(M \otimes M')}{c^{-}(M \otimes M')} =  \left( \frac{c^{+}(M)}{c^{-}(M)} \right)^{\epsilon(M')}.
\end{equation}
\smallskip

The case when both $M$ and $M'$ have same parity can be handled in an exactly analogous manner. We consider two cases: \medskip

\textbf{Case 1:} Both $M$ and $M'$ have odd ranks $n = 2k + 1$, $n' = 2k' +1,$ respectively.
From the definition of the integers $a^{\pm}_{i}$ and $a^{\ast,\pm}_{i}$ it follows that
\[
a^{+}_{i} = a^{-}_{i} \quad  \forall ~ 1 \leq i \leq ~\text{min}\left\{d^{\pm}(M)\right\}, \quad {\rm and} \quad
a^{*,+}_{j} = a^{*,-}_{j} \quad \forall ~ 1 \leq j \leq ~\text{min}\left\{d^{\pm}(M')\right\}.
\]

\medskip
\begin{thm} \label{PR2}
If both $M$ and $M'$ are of odd rank and $M \otimes M'$ is critical, then
\begin{equation*}
c^{+} (M \otimes M') = T c^{\epsilon(M')}(M) c^{\epsilon(M)}(M'), \quad
c^{-} (M \otimes M') = T c^{-\epsilon(M')}(M) c^{-\epsilon(M)}(M'),
\end{equation*}
where the period $T$ is defined by the same formula as in Thm.\,\ref{PR1}; but note that $k$ and $\mathcal{P}$
(resp., $k'$ and $\mathcal{P}'$) depend on $n$ (resp., $n'$).
\end{thm}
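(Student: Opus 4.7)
The plan is to perform the same admissibility-type bookkeeping that goes into Thm.\,\ref{PR1}, now in the setting where both $n$ and $n'$ are odd. Factor $c^{\pm}(M \otimes M') = \phi^{\pm}(X)\,\psi^{\pm}(Y)$, where $X,Y$ are the period matrices of $M$ and $M'$ and
\[
\phi^{\pm}(X):\{(a_{1}^{\pm},\dots,a_{n}^{\pm}),(d^{\pm}(M'),d^{\mp}(M'))\},\quad \psi^{\pm}(Y):\{(a_{1}^{*,\pm},\dots,a_{n'}^{*,\pm}),(d^{\pm}(M),d^{\mp}(M))\}.
\]
I would posit
\[
\phi^{+}(X)\;=\;\delta(M)^{\alpha}\,c^{+}(M)^{\alpha^{+}}\,c^{-}(M)^{\alpha^{-}}\prod_{p\in\mathcal{P}}c_{p}(M)^{\alpha_{p}},
\]
and the analogous monomial for $\psi^{+}(Y)$ in the $M'$-invariants, as well as the $-$ versions. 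By Lemma\,\ref{multi}, the admissibility type of each such monomial is the componentwise sum of the individual types tabulated after (\ref{1}); matching against the admissibility types above produces a linear system in the exponents which, by Yoshida's uniqueness of invariant polynomials, admits a unique solution.

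The first-coordinate ($\lambda_{1}$-)equations for $\phi^{+}(X)$ only involve $a_{i}^{+}$ for indices $i$ in the range $1\leq i\leq k=\min\{d^{\pm}(M)\}$, and those for $\psi^{+}(Y)$ only involve $a_{j}^{*,+}$ for $1\leq j\leq k'=\min\{d^{\pm}(M')\}$; in this range, the remark preceding the theorem gives $a_{i}^{+}=a_{i}^{-}$ and $a_{j}^{*,+}=a_{j}^{*,-}$, so the $\lambda_{1}$-equations for $\phi^{+}$ and $\phi^{-}$ (resp., $\psi^{+}$ and $\psi^{-}$) are identical to one another and identical to their counterparts in the proof of Thm.\,\ref{PR1}. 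Solving triangularly from the top filtration piece downwards yields the same values $\alpha = a_{n}$, $\beta = a_{n'}^{*}$, $\alpha_{p} = a_{p} - a_{p+1}$ for $p\in\mathcal{P}$, $\beta_{p} = a_{p}^{*} - a_{p+1}^{*}$ for $p\in\mathcal{P}'$, together with common values of the sums $\alpha^{+}+\alpha^{-}$ and $\beta^{+}+\beta^{-}$ that appear in $T$. The second-coordinate ($\lambda_{2}$-)equations, which in Thm.\,\ref{PR1} only broke symmetry for the $M'$-factor, now break symmetry for both factors and read
\[
\alpha^{+} - \alpha^{-} \;=\; d^{+}(M') - d^{-}(M') \;=\; \epsilon(M'),\qquad \beta^{+} - \beta^{-} \;=\; d^{+}(M) - d^{-}(M) \;=\; \epsilon(M).
\]

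Packaging all the exponents that are symmetric in $\pm$ into the factor $T$ of Thm.\,\ref{PR1}, the asymmetric residue coming from $\phi^{+}(X)$ then contributes exactly one extra factor of $c^{\epsilon(M')}(M)$ and the asymmetric residue coming from $\psi^{+}(Y)$ contributes exactly one extra factor of $c^{\epsilon(M)}(M')$, yielding the claimed formula $c^{+}(M \otimes M') = T\,c^{\epsilon(M')}(M)\,c^{\epsilon(M)}(M')$; the formula for $c^{-}(M \otimes M')$ is obtained by swapping every $+$ with $-$ throughout. I expect the main obstacle to be combinatorial rather than conceptual: one must verify that the shared exponents occurring in $T$ really do coincide for the $+$ and the $-$ problem, so that $T$ is unambiguously defined, and that the resulting over-determined linear system is consistent. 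Both facts reduce to checking the global totals of the sequences $(a_{i}^{\pm})$ and $(a_{j}^{*,\pm})$ against the second-coordinate targets dictated by $\lambda_{2}$, which in turn follow from the purity relations $p_{j}+p_{n+1-j}=w$, $q_{j}+q_{n'+1-j}=w'$ and the criticality hypothesis on $M \otimes M'$.
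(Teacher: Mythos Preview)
Your proposal is correct and follows essentially the same approach as the paper, which simply declares the odd--odd case ``exactly analogous'' to Thm.\,\ref{PR1} and records only the preparatory observation $a_i^+=a_i^-$ (and $a_j^{*,+}=a_j^{*,-}$) in the relevant range; your write-up in fact spells out more of the bookkeeping than the paper does. One small imprecision: the $\lambda_1$-equations for $\phi^{\pm}(X)$ are not literally supported on indices $1\le i\le k$ (the equation at position $n$ gives $\alpha=a_n^{\pm}$, for instance), but you correctly flag at the end that the well-definedness of $T$ and the consistency of the over-determined system are what require the purity relations, so the argument goes through.
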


\medskip

The following period relation from \cite{bhagwat-raghuram} is an easy consequence:
\begin{equation} \label{rel2}
\frac{c^{+} (M \otimes M') }{c^{-} (M \otimes M') } =  \left( \frac{c^{+}(M)}{c^{-}(M)} \right)^{\epsilon(M')}  \left( \frac{c^{+}(M')}{c^{-}(M')} \right)^{\epsilon(M)}.
\end{equation}

\bigskip

\textbf{Case 2:} If both $M$ and $M'$ have even ranks then it turns out that $c^{+} (M \otimes M') = c^{-} (M \otimes M').$  Set $k = n/2$ and $k' = n'/2$, then we have:
\medskip
 \begin{thm}  \label{PR3}
 If both $M$ and $M'$ are of even rank and $M \otimes M'$ is critical, then
 \begin{multline*}
c^{\pm} (M \otimes M')
 = \delta(M)^{a_{n}}~ \delta(M')^{a^{\ast}_{n'}} ~\left[c^{+}(M)c^{-}(M)\right]^{a_{k} - k'}
 [c^{+}(M')c^{-}(M')]^{a^{\ast}_{k'} - k} \\
\cdot \prod \limits_{p \in \mathcal{P}} c_{p}(M)^{a_{p} - a_{p+1}}~  \prod \limits_{p \in \mathcal{P'}} c_{p}(M')^{a^{\ast}_{p} - a^{\ast}_{p+1}}.
\end{multline*}
\end{thm}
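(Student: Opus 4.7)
The plan is to run exactly the same admissibility-type bookkeeping that was used to derive Theorems \ref{PR1} and \ref{PR2}, only specialized to the case $n=2k$, $n'=2k'$. Writing
\begin{equation*}
c^{\pm}(M \otimes M') = \delta(M)^{\alpha}\delta(M')^{\beta} c^{+}(M)^{\alpha^{+}}c^{-}(M)^{\alpha^{-}} c^{+}(M')^{\beta^{+}}c^{-}(M')^{\beta^{-}} \prod_{p \in \mathcal{P}} c_{p}(M)^{\alpha_{p}} \prod_{p \in \mathcal{P}'} c_{p}(M')^{\beta_{p}},
\end{equation*}
the key input is Lemma \ref{multi}: the admissibility type of the right-hand side is the componentwise sum (weighted by the exponents) of the admissibility types listed in Section~3 for the basic invariants, and by Yoshida's uniqueness result this type must coincide with that of $\phi^{\pm}(X)\psi^{\pm}(Y)$.

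First I would record the admissibility types of $\phi^{\pm}(X)$ and $\psi^{\pm}(Y)$ in the present even/even setting. Since $d^{+}(M)=d^{-}(M)=k$ and $d^{+}(M')=d^{-}(M')=k'$, the right characters are $(k',k')$ for $\phi^{\pm}(X)$ and $(k,k)$ for $\psi^{\pm}(Y)$; in particular they are independent of the sign $\pm$. Likewise the $a_i$ and $a^{\ast}_{j}$ defined in Section~2.2 are computed with the single index $k_{0}$ (as $nn'$ is even), so the left characters $(a_{1},\dots,a_{n})$ and $(a^{\ast}_{1},\dots,a^{\ast}_{n'})$ do not depend on the sign either. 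It follows immediately that $\phi^{+}(X)=\phi^{-}(X)$ and $\psi^{+}(Y)=\psi^{-}(Y)$ up to the common scalar ambiguity permitted by Yoshida's one-dimensionality theorem, whence $c^{+}(M\otimes M')=c^{-}(M\otimes M')$. This explains why the expression in the theorem carries a single sign label.

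Next I would match coordinates. On the right-character side, Lemma \ref{multi} gives the two equations
\begin{equation*}
\alpha+\alpha^{+}+\sum_{p}\alpha_{p}=k',\qquad \alpha+\alpha^{-}+\sum_{p}\alpha_{p}=k',
\end{equation*}
forcing $\alpha^{+}=\alpha^{-}$. On the left-character side, reading off the last coordinate $a_{n}$ (where only $\delta(M)$ contributes) yields $\alpha=a_{n}$; reading off coordinate $k$ (where $\delta(M)$, both $c^{\pm}(M)$, and all $c_{p}(M)$ with $p\in\mathcal{P}$ contribute, since $p\le k-1<k\le n-p$) gives $a_{k}=\alpha+\alpha^{+}+\alpha^{-}+\sum_{p}\alpha_{p}$. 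Combining these with the right-character relation produces $\alpha^{+}=\alpha^{-}=a_{k}-k'$. Finally, the difference $a_{p}-a_{p+1}$ for $1\le p\le k-1$ receives a contribution only from $c_{p}(M)$ (the jumps of the $c^{\pm}(M)$-character occur at $i=k$, and the jump of each $c_{q}(M)$ occurs at $i=q$ and $i=n-q$, which is in the upper half), giving $\alpha_{p}=a_{p}-a_{p+1}$. The analogous analysis applied to $Y$, using the right character $(k,k)$ of $\psi^{\pm}(Y)$, yields $\beta=a^{\ast}_{n'}$, $\beta^{+}=\beta^{-}=a^{\ast}_{k'}-k$ and $\beta_{p}=a^{\ast}_{p}-a^{\ast}_{p+1}$. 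Substituting these values into the monomial gives the formula claimed in Theorem \ref{PR3}.

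There is no real obstacle here beyond bookkeeping; the mild care needed is at coordinate $i=k$, where one must verify that none of the $c_{p}(M)$ with $p\in\mathcal{P}$ has a character jump at that coordinate (so that the only sources of $a_{k}-a_{k+1}$ are $c^{+}(M)$ and $c^{-}(M)$, fixing $\alpha^{+}+\alpha^{-}=2(a_{k}-k')$ together with the right-character equation). The symmetry $a_{p}-a_{p+1}=a_{n-p}-a_{n-p+1}$ coming from complex conjugation on Hodge types ensures that the two coordinate determinations of each $\alpha_{p}$ agree, so the system is consistent and has the unique solution above.
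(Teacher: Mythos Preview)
Your proposal is correct and follows exactly the approach the paper intends: the paper itself merely asserts that the even/even case ``can be handled in an exactly analogous manner'' to Theorem~\ref{PR1}, and you have carried out that admissibility-type bookkeeping in detail, including the observation that $d^{\pm}(M')=k'$ and $d^{\pm}(M)=k$ force the right characters of $\phi^{\pm}(X)$ and $\psi^{\pm}(Y)$ to be sign-independent, so that $c^{+}=c^{-}$. Your consistency remark about the symmetry $a_{p}-a_{p+1}=a_{n-p}-a_{n-p+1}$ is a useful addition that the paper does not make explicit.
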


\smallskip

\begin{rem} A comparison of Thm.1.1 in \cite{raghuram-2014} with
our Thm.\,\ref{PR1} shows that the periods $p^\epsilon(\Pi)$ and $c^\pm(M(\Pi))$ are very different. On the other hand, an interesting question is whether there is an automorphic analogue of our period relations. In the particular case when the base field is an imaginary quadratic extension of
$\Q$, and $\Pi$ is a base change from a unitary group, there is a period relation by Grobner--Harris~\cite[Thm.\,6.7]{grobner-harris}. In general, it seems to be a hard question. The real problem seems to be to identify the finer invariants of Yoshida, denoted $c_{p}(M)$ earlier, when
the motive $M = M(\pi)$ corresponds to a cohomological cuspidal representation $\Pi$, to some kind of
automorphic invariants defined entirely in terms of $\Pi$.
\end{rem}
\smallskip

\begin{acknow}
It is a pleasure to thank Prof. Raghuram for suggesting this problem and for all the helpful discussions on this subject.

 We also thank the anonymous referee for providing constructive comments and help in improving the contents of this article. \end{acknow}

\smallskip

\end{document}